\documentclass[11pt]{article}
\usepackage[a4paper,margin=25mm]{geometry}
\usepackage[T1]{fontenc}
\usepackage{amsmath,amssymb,amsthm}
\usepackage[colorlinks=true,linkcolor=blue,citecolor=blue,urlcolor=blue]{hyperref}
\newcommand{\R}{\mathbb R}
\newcommand{\C}{\mathbb C}
\newcommand{\D}{\mathcal D}
\newcommand{\K}{\mathsf K}
\newcommand{\curl}{\nabla\times}
\newcommand{\diver}{\nabla\cdot}
\newcommand{\Var}{\operatorname{Var}}
\newcommand{\im}{\operatorname{Im}}
\newcommand{\re}{\operatorname{Re}}
\newcommand{\dd}{\,\mathrm d}
\newcommand{\norm}[1]{\left\lVert#1\right\rVert}

\newcommand{\inner}[2]{\left\langle#1,#2\right\rangle}
\theoremstyle{plain}
\newtheorem{theorem}{Theorem}[section]
\newtheorem{proposition}[theorem]{Proposition}
\newtheorem{lemma}[theorem]{Lemma}

\theoremstyle{definition}
\newtheorem{assumption}[theorem]{Assumption}
\theoremstyle{remark}
\newtheorem{remark}[theorem]{Remark}
\hypersetup{pdftitle={Finite-time divergence of kinetic-energy fluctuations in a Schrodinger-induction flow},pdfauthor={Weishuo Liu}}
\title{Finite-time divergence of kinetic-energy fluctuations\newline in a Schr\"odinger--induction flow}
\author{Weishuo Liu\\[3pt]
\small School of Mechanics and Engineering Science\\
\small Peking University, Beijing 100871, China\\
\small \href{mailto:liuweishuo@pku.edu.cn}{\texttt{liuweishuo@pku.edu.cn}}}
\date{}
\begin{document}
\maketitle

\begin{abstract}
In this study, we use a constrained Schr\"odinger--induction system to investigate kinetic-energy fluctuations induced by a fluid singularity. Assuming the cited Navier--Stokes construction with zero initial velocity and its exact exterior profile, we construct a periodic trajectory from a constant normalized wave function and zero connection. The prescribed body force admits a smooth space--time extension across the normalized singular time $t=1$. In this construction, the probability density remains constant and the mean matter kinetic energy stays bounded, whereas the gauge-invariant kinetic-energy variance diverges. We prove a lower growth bound of order $(1-t)^{-1/2-5h}$, with $h$ fixed by the source construction, by integrating the fourth power of the complete velocity's magnitude over an exact exterior region where all oscillatory corrections vanish. After fixing a gauge with constant wave function, we identify a unique weak $L^2$ connection limit outside $L^4$ and construct its positive self-adjoint kinetic operator through the associated magnetic form. We further show that the normalized endpoint state belongs to the form domain but not the operator domain: its first kinetic spectral moment is finite, whereas its second is infinite. These results connect fluid concentration to a loss of kinetic-domain regularity and provide a dynamical benchmark for assessing the limits of mean-energy control in hydrodynamic wave formulations.

\end{abstract}

\section{Introduction}
Hydrodynamic wave formulations allow a fluid trajectory to be studied through quantities associated with a wave function. Madelung's density--phase transformation provides the basic connection between Schr\"odinger evolution and hydrodynamic equations \cite{Madelung1927}. Recovering a wave description from hydrodynamic variables also requires the appropriate phase conditions, as emphasized by Wallstrom \cite{Wallstrom1994}. For a scalar wave function without zeros, the phase velocity is locally a gradient. Describing smooth rotational motion therefore requires additional structure, and the evolution of that structure must be specified before a fluid singularity can be interpreted in wave variables.

Several formulations address rotational motion in different ways. Dietrich and Vautherin \cite{Dietrich1985} introduced a scalar Schr\"odinger equation coupled to a rotational field, a structure recently revisited by Cappelli et al.\ \cite{Cappelli2026} for quantum algorithms. Chern et al.\ \cite{Chern2016} instead used a two-component wave function in incompressible Schr\"odinger flow, whose energy contains both fluid kinetic and spin-gradient contributions. For viscous flow, Meng and Yang \cite{MengYang2024NS} developed a nonlinear Schr\"odinger--Pauli representation with imaginary diffusion and additional spin-dependent terms. These approaches give different evolution laws and energy contributions, making the choice of observable part of the singularity question.

In the present study, we adopt the scalar wave function with a dynamical rotational connection. The Dietrich--Vautherin structure is adapted to incompressible flow by replacing the barotropic enthalpy closure with the periodic Leray pressure and retaining the prescribed body force. As a result, the wave state and connection evolve together according to the current fields. This distinguishes the dynamical formulation from reconstruction of an external potential using a prescribed future current \cite{Farzanehpour2016}; the kinematic reconstruction itself is established.

The use of wave or spin variables to study fluid regularity has several precedents. Merle et al.\ \cite{Merle2022} used compressible Euler implosion in nonlinear Schr\"odinger blowup constructions. Ohkitani \cite{Ohkitani2017} introduced imaginary-time variables for Navier--Stokes regularity diagnostics, whereas Meng and Yang \cite{MengYang2024SpinEuler} derived an ideal-flow regularity criterion involving the spin Laplacian. The compressible Schr\"odinger-type formulation of Beattie et al.\ \cite{Beattie2026} provides another related setting. These results motivate asking which observable records the loss of regularity in a specified wave representation. Here we examine the gauge-invariant fluctuations of the covariant kinetic-energy operator in a real-time, constant-density model.

The first and second kinetic spectral moments distinguish two levels of control. For a positive observable, a finite first moment requires membership in its form domain, whereas a finite second moment requires membership in its operator domain \cite[Sec.~3.1]{Teschl2014}. In the representation used here, the mean kinetic energy depends on the second spatial moment of the velocity, while its variance also depends on the fourth. Consequently, bounded fluid kinetic energy and unbounded velocity alone do not determine the kinetic variance. For the trajectory studied below, an exact exterior profile supplies the additional information needed to prove its divergence. At the endpoint, the magnetic-form approach for rough connections \cite{Simon1979} gives a precise operator interpretation of the result. Existing domain-preservation results for fixed fields \cite{Boil2020} and stability results for time-dependent Hamiltonians under regularity assumptions \cite{Balmaseda2024} do not directly determine this dynamically generated rough endpoint.

Assuming the Navier--Stokes construction stated in Ref.~\cite{OpenAI2026} and its specified local properties, we construct a trajectory from constant normalized wave data and zero connection. The external source supplies the fluid existence result; its complete proof is not independently verified here. For this trajectory, we prove an explicit lower growth bound for the kinetic variance, although the probability density remains constant and the mean matter kinetic energy stays bounded. We then identify the limiting connection and prove that its kinetic operator has a normalized state with finite first and infinite second spectral moments. Together, the rate and endpoint result connect fluid concentration with kinetic regularity beyond the mean-energy level within the established hydrodynamic representation.

Section 2 presents the constrained wave--induction model. Sections 3 and 4 establish the moment estimate and endpoint-domain result. Section 5 discusses their physical implications, including the role of the full generator and of high-energy dispersion, and Section 6 reports the smooth-field consistency checks.

\section{A closed wave--induction representation}
Let $M$ be a flat three-torus of volume $V$, set $n_0=V^{-1}$, and fix $\eta=\hbar/m>0$ and $\nu>0$. Energies are written per unit mass: the physical matter kinetic operator is $m\K_A$. For a real connection $A$ define
\begin{equation}
 D_A=-i\eta\nabla+A,\qquad \K_A=\frac12D_A^2,
 \qquad j=\re(\overline\psi D_A\psi),\qquad u=j/n_0.
 \label{eq:definitions}
\end{equation}
The connection has velocity units, and $p$ below is kinematic pressure. We consider smooth fields on the constraint manifold
\begin{equation}
 |\psi|^2=n_0,\qquad \diver u=0.
 \label{eq:constraints}
\end{equation}
Given the real body force $f(x,t)$, the pressure is determined from the current fields by
\begin{equation}
 -\Delta p[u]=\diver\big((u\cdot\nabla)u-f\big),\qquad \int_Mp[u]\dd x=0.
 \label{eq:pressure}
\end{equation}
The right side has zero mean on the torus, which makes the Poisson problem solvable; the integral condition fixes its additive constant. The wave and connection then satisfy
\begin{align}
 i\eta\partial_t\psi&=(\K_A+\Phi)\psi,
 \label{eq:wave}\\
 \partial_t A&=u\times(\curl A)-\nu\curl\curl A+f+\nabla(\Phi-p[u]).
 \label{eq:induction}
\end{align}
The real scalar $\Phi$ specifies the gauge. Choosing $\Phi=p[u]$ closes the evolution on the constraint manifold \eqref{eq:constraints}, since the pressure is determined by the current fields and the body force is prescribed. The rotational equation retains the hydrodynamic induction structure \cite{Cappelli2026}. Thus the model gives an evolution law for both the wave state and the connection. Its response to arbitrary density perturbations is outside the present constrained analysis.

For real smooth periodic $\lambda(x,t)$, the transformations
\begin{equation}
 \psi'=e^{i\lambda/\eta}\psi,\qquad A'=A-\nabla\lambda,
 \qquad \Phi'=\Phi-\partial_t\lambda
 \label{eq:gauge}
\end{equation}
leave $u$, $j$ and $\curl A$ invariant. In fact
$D_{A'}(e^{i\lambda/\eta}g)=e^{i\lambda/\eta}D_Ag$.
The gradient term in \eqref{eq:induction} makes the connection equation covariant under time-dependent transformations, since both sides change by $-\nabla\partial_t\lambda$. Omitting this term restricts the equation to its fixed-gauge form.

\begin{proposition}[Exact transfer on the smooth interval]
\label{prop:transfer}
Let $(u,p)$ be a smooth real periodic solution on $0\le t<T$ of
\begin{equation}
 \partial_tu+(u\cdot\nabla)u=-\nabla p+\nu\Delta u+f,
 \qquad\diver u=0,\qquad\int_Mp\dd x=0.
 \label{eq:NS}
\end{equation}
For any real smooth periodic $S_0$, define
\begin{equation}
 S(x,t)=S_0(x)-\int_0^t\left(\frac12|u(x,s)|^2+p(x,s)\right)\dd s,
 \quad \psi=\sqrt{n_0}e^{iS/\eta},\quad A=u-\nabla S.
 \label{eq:lift}
\end{equation}
These fields solve \eqref{eq:constraints}--\eqref{eq:induction} with $\Phi=p$ throughout $[0,T)$. If $u(0)=0$ and $S_0=0$, their initial values are $\psi(0)=\sqrt{n_0}$ and $A(0)=0$. Conversely, every smooth constrained solution of \eqref{eq:wave}--\eqref{eq:induction} induces \eqref{eq:NS}.
\end{proposition}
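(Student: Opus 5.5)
Because $\psi=\sqrt{n_0}e^{iS/\eta}$ has constant modulus, every quantity in the model can be computed in closed form. The plan is to verify the constraints, the wave equation and the induction equation in turn; the forward direction is then pure calculation, and the converse runs the same identities backwards.

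\emph{Step 1: constraints and current.} Since $|\psi|^2=n_0$ holds trivially, the first constraint is immediate. Next, $D_A\psi=(-i\eta\nabla+A)\psi=(\nabla S+A)\psi=u\,\psi$, using $-i\eta\nabla e^{iS/\eta}=\nabla S\,e^{iS/\eta}$. Hence $j=\re(\overline\psi\,u\psi)=n_0u$, so the induced velocity is exactly $u$, and $\diver u=0$ follows from \eqref{eq:NS}. It follows that $p[u]$ defined by \eqref{eq:pressure} coincides with $p$: taking the divergence of \eqref{eq:NS} with $\diver u=0$ gives $-\Delta p=\diver((u\cdot\nabla)u-f)$, and the normalizations agree.

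\emph{Step 2: wave equation.} I compute $D_A^2\psi=D_A(u\psi)=-i\eta(\diver u)\psi+u\cdot D_A\psi=|u|^2\psi$, using $\diver u=0$. Therefore $\K_A\psi=\tfrac12|u|^2\psi$, while $i\eta\partial_t\psi=-\partial_tS\,\psi=(\tfrac12|u|^2+p)\psi$. With $\Phi=p$ this is exactly \eqref{eq:wave}.

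\emph{Step 3: induction equation.} We have $\curl A=\curl u=:\omega$ and $\partial_tA=\partial_tu+\nabla(\tfrac12|u|^2+p)$. Substituting \eqref{eq:NS} and the identity $(u\cdot\nabla)u=\nabla\tfrac12|u|^2-u\times\omega$ gives $\partial_tA=u\times\omega+\nu\Delta u+f$. Since $\diver u=0$, we have $\nu\Delta u=-\nu\curl\curl u=-\nu\curl\curl A$. Because $\Phi-p[u]=0$, this is \eqref{eq:induction}. The initial values follow by setting $t=0$, $u(0)=0$ and $S_0=0$.

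\emph{Converse.} Given a smooth constrained solution, write locally $\psi=\sqrt{n_0}e^{i\theta}$. The phase need not be globally single-valued, but $\nabla\theta=\overline\psi\nabla\psi/(in_0)$ and $\partial_t\theta$ are smooth periodic fields. Then $u=\eta\nabla\theta+A$, so that $D_A\psi=u\psi$. Dividing \eqref{eq:wave} by $\psi$, its imaginary part is $-\tfrac{\eta}{2}\diver u=0$, which is consistent with the constraint. Its real part gives $-\eta\partial_t\theta=\tfrac12|u|^2+\Phi$. Now differentiate $u$ in time: $\partial_tu=\eta\nabla\partial_t\theta+\partial_tA=-\nabla(\tfrac12|u|^2+\Phi)+u\times\curl u-\nu\curl\curl u+f+\nabla(\Phi-p[u])$, where I use $\curl A=\curl u$. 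The $\Phi$ terms cancel; this is why $\Phi$ is arbitrary, namely gauge. The vector identity then yields \eqref{eq:NS} with $p=p[u]$, which has zero mean by \eqref{eq:pressure}.

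The main obstacle is the converse. One must handle a possibly multivalued phase, by working with $\nabla\theta$ and $\partial_t\theta$ rather than $\theta$ itself, and one must check that the imaginary part of the wave equation is consistent with $\diver u=0$. Everything else is routine vector calculus.
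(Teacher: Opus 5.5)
Your proposal is correct and follows the paper's proof essentially step for step. You use the identities $D_A\psi=u\psi$ and $D_A^2\psi=(|u|^2-i\eta\diver u)\psi$, obtain $\partial_tA=u\times\omega+\nu\Delta u+f$ via $(u\cdot\nabla)u=\nabla\tfrac12|u|^2-u\times\omega$, and base the converse on the globally defined phase one-form $\eta\nabla\theta=\eta\im(\overline\psi\nabla\psi)/n_0$ rather than a global phase. The paper additionally notes that $S$ is smooth on each compact slab $[0,T_*]\times M$ with $T_*<T$, which you use implicitly; you, for your part, check $j=n_0u$ and $p[u]=p$ more explicitly than the paper does.
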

\begin{proof}
For each $T_*<T$, the integrand in \eqref{eq:lift} is smooth on a compact space--time set. Thus $S$, $\psi$, and $A$ are smooth and periodic on that interval. These intervals exhaust $[0,T)$; no limiting phase at $T$ is assumed. Direct differentiation gives
\begin{equation}
 D_A\psi=u\psi,\qquad D_A^2\psi=(|u|^2-i\eta\diver u)\psi=|u|^2\psi.
 \label{eq:action}
\end{equation}
Therefore $i\eta\psi_t=(|u|^2/2+p)\psi=(\K_A+p)\psi$. Also $\curl A=\curl u=:\omega$, and
\[
 A_t=u_t+\nabla(|u|^2/2+p)=u\times\omega+\nu\Delta u+f
 =u\times\omega-\nu\curl\omega+f.
\]
This is \eqref{eq:induction} in the fixed gauge. Equation \eqref{eq:pressure} follows by taking the divergence of \eqref{eq:NS}.

For the converse, the nonzero constant modulus of $\psi$ permits a local real phase. Its phase one-form $b=\eta\im(\overline\psi\nabla\psi)/n_0$ is globally defined and curl-free, irrespective of a possible spatial phase winding. Locally \eqref{eq:wave} and \eqref{eq:action} give $b_t=-\nabla(|u|^2/2+\Phi)$. Combining this with \eqref{eq:induction} and $u=b+A$ gives \eqref{eq:NS}. All expressions agree on overlapping phase charts.
\end{proof}

Taking $\lambda=-S$ in \eqref{eq:gauge} gives the useful constant-phase gauge
\begin{equation}
 \psi=\psi_0:=V^{-1/2},\qquad A=u,\qquad\Phi=-|u|^2/2.
 \label{eq:constantgauge}
\end{equation}
With this gauge choice, \eqref{eq:induction} reduces to \eqref{eq:NS}. The trajectory therefore inherits its growth mechanism from the fluid evolution, while the constant wave function allows the kinetic moments to be expressed directly in terms of velocity. This relation is used below to distinguish mean-energy control from control of the fluctuations.

\section{Kinetic moments and finite-time growth}
For each $t<T$, the smooth real connection $A$ defines a nonnegative self-adjoint magnetic Laplacian $\K_A$ on $H^2(M)$. The pressure is real and bounded at that time, so $\K_A+p$ is self-adjoint on the same domain. These properties justify the moment calculations on the smooth interval; the endpoint requires the separate construction in Section 4.

\begin{proposition}[First and second kinetic spectral moments]
\label{prop:moments}
For every smooth constrained state,
\begin{align}
 \mu_1(t):=\inner\psi{\K_A\psi}&=\frac{n_0}{2}\norm u_2^2,
 \label{eq:mean}\\
 \Var_\psi(\K_A)&=\frac{n_0}{4}\norm u_4^4-
                     \frac{n_0^2}{4}\norm u_2^4.
 \label{eq:variance}
\end{align}
Both statements are invariant under \eqref{eq:gauge}.
\end{proposition}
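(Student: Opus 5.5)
The plan is to prove gauge invariance first and then evaluate both moments in the constant-phase gauge \eqref{eq:constantgauge}, where the computation reduces to the identity \eqref{eq:action}.

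\emph{Gauge invariance.} Under \eqref{eq:gauge} with smooth periodic $\lambda$, the covariance relation $D_{A'}(e^{i\lambda/\eta}g)=e^{i\lambda/\eta}D_Ag$ gives
\[
\K_{A'}=U\K_AU^{-1},\qquad U=e^{i\lambda/\eta},
\]
and $U$ is unitary on $L^2(M)$ and preserves $H^2(M)$. Hence $\inner{\psi'}{\K_{A'}^k\psi'}=\inner\psi{\K_A^k\psi}$ for $k=1,2$. The right sides of \eqref{eq:mean} and \eqref{eq:variance} depend only on $u$, which is gauge invariant. So it suffices to verify both formulas in one gauge.

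\emph{Reduction to the constant gauge.} Let $\psi$ be a smooth constrained state with $|\psi|^2=n_0$. As in the converse part of Proposition~\ref{prop:transfer}, the phase one-form $b$ is closed. The step I expect to be the main obstacle is that $b$ need not be exact on the torus, because of possible phase winding. I would handle this by writing $\psi=\sqrt{n_0}e^{iS/\eta}$ locally, so that $b=\nabla S$ locally. Since $D_A\psi=(b+A)\psi=u\psi$ holds locally, and both sides are globally defined, the relation
\[
D_A\psi=u\psi
\]
holds globally. No global gauge function is needed: I compute directly with this identity.

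\emph{The two moments.} Using $\diver u=0$, a further application of $D_A$ gives \eqref{eq:action},
\[
D_A^2\psi=D_A(u\psi)=\big(-i\eta\,\diver u\big)\psi+u\cdot D_A\psi=|u|^2\psi,
\]
so $\K_A\psi=\tfrac12|u|^2\psi$. Then
\[
\mu_1=\tfrac12\int|u|^2|\psi|^2\dd x=\tfrac{n_0}{2}\norm u_2^2 .
\]
Because $\K_A$ is self-adjoint and $\psi\in H^2$,
\[
\inner\psi{\K_A^2\psi}=\norm{\K_A\psi}_2^2=\tfrac14 n_0\norm u_4^4 .
\]
Subtracting $\mu_1^2=\tfrac{n_0^2}{4}\norm u_2^4$ yields \eqref{eq:variance}.

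The only remaining points are to note that $\norm\psi_2=1$, since $n_0V=1$, so that these quantities are genuine expectations, and that smoothness ensures $\psi\in H^2(M)=\mathrm{dom}\,\K_A$.
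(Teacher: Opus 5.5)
Your proposal is correct and follows essentially the same route as the paper: you obtain $\K_A\psi=\tfrac12|u|^2\psi$ from \eqref{eq:action}, take the inner product with $\psi$ for $\mu_1$, use self-adjointness to write the second moment as $\norm{\K_A\psi}_2^2=\tfrac{n_0}{4}\norm u_4^4$, subtract $\mu_1^2$, and get gauge invariance from unitary conjugation by $e^{i\lambda/\eta}$. Your local-phase-chart argument that $D_A\psi=u\psi$ holds for an arbitrary constrained state, not only for the lifted fields of Proposition~\ref{prop:transfer}, supplies a detail the paper leaves implicit; since you then compute in an arbitrary gauge, the announced reduction to the constant-phase gauge is not actually needed for the formulas themselves.
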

\begin{proof}
Equation \eqref{eq:action} gives $\K_A\psi=|u|^2\psi/2$. The first moment is its inner product with $\psi$. The second spectral moment is
$\norm{\K_A\psi}_2^2=n_0\norm u_4^4/4$, by self-adjointness and the spectral theorem. Subtracting $\mu_1^2$ proves \eqref{eq:variance}. Gauge covariance is unitary conjugation of the kinetic operator together with transformation of its state.
\end{proof}

Equation \eqref{eq:variance} shows that the fourth velocity moment is the additional quantity needed to control the kinetic variance. The operator still contains derivatives, even though its action on this particular state has the simple form \eqref{eq:action}. More specifically, setting $g=|u|^2/2$ gives the third moment
\begin{equation}
 \inner\psi{\K_A^3\psi}
 =\frac12\norm{D_A(g\psi)}_2^2
 =n_0\int_M\left(g^3+\frac{\eta^2}{2}|\nabla g|^2\right)\dd x.
 \label{eq:third}
\end{equation}
The gradient contribution shows why the first two moment identities do not identify the full kinetic spectral distribution with the spatial distribution of $|u|^2/2$. The lower-bound argument below uses the second spectral moment, for which the velocity formula is exact.

For the singularity input and subsequent estimates, we use dimensionless source variables. Let $t_{\rm ref}$ be the physical singular time and choose $L_{\rm ref}^2=\nu_{\rm phys}t_{\rm ref}$. With $x_{\rm phys}=L_{\rm ref}x$ and $t_{\rm phys}=t_{\rm ref}t$, the dimensionless viscosity and singular time are $\nu=T=1$, and $\eta$ henceforth denotes $\eta_{\rm phys}t_{\rm ref}/L_{\rm ref}^2=\hbar/(m\nu_{\rm phys})$. Velocities, connections, and the comparison parameter $c$ are scaled by $L_{\rm ref}/t_{\rm ref}$, kinematic pressures and specific energies by $L_{\rm ref}^2/t_{\rm ref}^2$, and the force by $L_{\rm ref}/t_{\rm ref}^2$. Normalization is preserved by $V_{\rm phys}=L_{\rm ref}^3V$ and $\psi_{\rm phys}=L_{\rm ref}^{-3/2}\psi$. All similarity coordinates below are dimensionless; restoring fixed physical scales changes the constants, but not the growth exponents or domain conclusions.

\begin{assumption}[Specific Navier--Stokes input]
\label{ass:source}
At viscosity $\nu=1$, assume a smooth solution of \eqref{eq:NS} on a sufficiently large flat three-torus for $0\le t<1$, with $u(0)=0$, force smooth through $t=1$, and
\begin{equation}
 \sup_{t<1}\norm{u(t)}_2\le C_E<\infty.
 \label{eq:energyinput}
\end{equation}
There is a single point $x_*=0$ away from which $u$ has a locally smooth one-sided limit. Let $h\in(0,1/100)$ be the fixed parameter supplied by the source construction, and put $d=1/2-h$. In local cylindrical coordinates $(r,\vartheta,z)$ near this point, put $\tau=1-t$ and define the positive $q$ by
\begin{equation}
 q-z^2q^{2h}=\tau,\qquad X=\frac{r^2}{2q}.
 \label{eq:similarity}
\end{equation}
For $q<q_*$ and $X\ge X_{\rm ext}$, sufficiently near $(x_*,1)$, the complete velocity is exactly
\begin{equation}
 u(r,\vartheta,z,t)=r^{-1-2h}F(\tau/r^2)e_\vartheta,
 \label{eq:exterior}
\end{equation}
where $F$ is continuous and strictly positive on $[0,\infty)$, and $F(0)>0$.
\end{assumption}

The source \cite{OpenAI2026} asserts the existence of these fields for the fixed $h$ in Assumption~\ref{ass:source}. The required properties are given in its Theorem 1.1, Theorem 3.1(iii), Proposition 9.9 Step 4, Lemmas 10.2--10.3, and Lemma A.6. A torus with a fundamental cell containing the complete fixed spatial support, with a positive boundary margin, allows periodization without changing the local coordinates. In the source notation $F(s)=2^{1/2+h}c_\infty H(4s)$, where $c_\infty>0$ and $H(0)=1$.

The localization equals one near the singular point. All oscillatory and mean corrections vanish in the exterior under consideration, so \eqref{eq:exterior} gives the complete velocity there. This exact formula provides the information needed for the fourth-moment estimate; a velocity-supremum bound alone would be insufficient. The construction vector potential used by the source is distinct from the connection $A$ of the wave model. Appendix~\ref{app:source} lists the individual source dependencies.

\begin{lemma}[Fourth-moment growth from the exact exterior]
\label{lem:shell}
Under Assumption~\ref{ass:source}, there are $c>0$ and $\tau_0>0$ such that
\begin{equation}
 \norm{u(1-\tau)}_4^4\ge c\tau^{-1/2-5h}\qquad(0<\tau<\tau_0).
 \label{eq:L4lower}
\end{equation}
\end{lemma}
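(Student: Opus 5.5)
We bound $\norm{u(1-\tau)}_4^4$ from below by integrating $|u|^4$ over a single explicit subregion of the exact exterior, on which Assumption~\ref{ass:source} gives the complete velocity by \eqref{eq:exterior}. The natural choice is a region at the parabolic scale $r^2\sim\tau$, $|z|\lesssim$ comparable scale. There $X=r^2/(2q)$ can be made at least $X_{\rm ext}$, the ratio $\tau/r^2$ stays in a compact subset of $[0,\infty)$, and so $F(\tau/r^2)$ is bounded below by a positive constant. Concretely, we take
\[
 R_\tau=\{(r,\vartheta,z):\ a\sqrt\tau\le r\le b\sqrt\tau,\ |z|\le \tau^{\beta}\},
\]
with constants $0<a<b$ and an exponent $\beta$ chosen so that $q$ remains comparable to $\tau$ on $R_\tau$.

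\textbf{Step 1 (control of $q$).} From \eqref{eq:similarity}, $q\ge\tau$ always. Since the map $q\mapsto q-z^2q^{2h}$ is increasing for small $q$ when $z$ is small, $q\le 2\tau$ holds as soon as $z^2(2\tau)^{2h}\le\tau$. This is guaranteed if $|z|\le c_1\tau^{1/2-h}=c_1\tau^{d}$. We therefore take $\beta=d$ and the prefactor $c_1$ small. Then $\tau\le q\le2\tau$ on $R_\tau$, and for $\tau<\tau_0$ we also have $q<q_*$ and the region lies in the prescribed neighbourhood of $(x_*,1)$.

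\textbf{Step 2 (exterior membership and a lower bound for $F$).} On $R_\tau$ we get $X=r^2/(2q)\ge a^2\tau/(4\tau)=a^2/4$, so choosing $a=2\sqrt{X_{\rm ext}}$ gives $X\ge X_{\rm ext}$. Hence \eqref{eq:exterior} is the full velocity on $R_\tau$. Moreover $\tau/r^2\in[b^{-2},a^{-2}]$, a compact subset of $[0,\infty)$. By continuity and positivity of $F$ there, $F\ge m_0>0$. Taking $b=2a$, we obtain $|u|\ge m_0 r^{-1-2h}$ on $R_\tau$.

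\textbf{Step 3 (integration).} We compute
\[
 \norm u_4^4\ge \int_{R_\tau}m_0^4r^{-4-8h}\,r\dd r\dd\vartheta\dd z
 =2\pi m_0^4\cdot 2c_1\tau^{d}\int_{a\sqrt\tau}^{b\sqrt\tau}r^{-3-8h}\dd r
 = C\,\tau^{1/2-h}\,\tau^{-1-4h}.
\]
This gives $\norm u_4^4\ge C\tau^{-1/2-5h}$, which is exactly \eqref{eq:L4lower}. Periodization does not affect this, because $R_\tau$ lies inside the fundamental cell near $x_*$.

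\textbf{Main obstacle.} The delicate point is Step 1: justifying that the implicit definition of $q$ gives $q\simeq\tau$ on the thin slab $|z|\lesssim\tau^{d}$. It is this slab thickness that produces the extra $\tau^{1/2-h}$ factor and hence the exponent $-1/2-5h$ rather than $-1-4h$. I would first verify monotonicity of $q-z^2q^{2h}$ in $q$ for small $q$ (the derivative is $1-2hz^2q^{2h-1}$, which is positive when $z^2\ll q^{1-2h}$, exactly our regime). I would then check that the sign of the $z$-correction only helps, since $q\ge\tau$ is automatic. I would also confirm from the source that the exterior region $\{q<q_*,X\ge X_{\rm ext}\}$ is not further restricted in $z$ beyond being near $x_*$.
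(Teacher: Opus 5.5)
Your proposal is correct and follows essentially the same route as the paper. Both use a parabolic shell $a\sqrt\tau\le r\le 2a\sqrt\tau$, $|z|\lesssim\tau^{d}$, on which $q$ is comparable to $\tau$. Consequently $X\ge X_{\rm ext}$ there, $F$ is bounded below on a compact interval of $\tau/r^2$, and the slab thickness $\tau^{d}$ produces the exponent $-1/2-5h$.

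The differences are cosmetic. The paper rescales $z=\tau^d\zeta$, $q=\tau Q$ to get the $\tau$-independent equation $Q-\zeta^2Q^{2h}=1$, then uses monotonicity on $Q\ge1$ to obtain $1\le Q\le 4$ with slab prefactor one. That is the precise form of your Step 1. As written, Step 1 is misstated: $q\mapsto q-z^2q^{2h}$ is actually decreasing near $q=0$. The correct statement is the one in your last paragraph, namely monotonicity on $q\ge\tau$ when $|z|\le c_1\tau^d$.
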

\begin{proof}
Fix $R$ with $R^2>8X_{\rm ext}$ and use the cylindrical shell
\[
 S_\tau=\{R\sqrt\tau\le r\le2R\sqrt\tau,\ |z|\le\tau^d,\ 0\le\vartheta<2\pi\}.
\]
Writing $z=\tau^d\zeta$ and $q=\tau Q$ in \eqref{eq:similarity} gives
$Q-\zeta^2Q^{2h}=1$, with $|\zeta|\le1$. Its left side is increasing for $Q\ge1$, since its derivative is at least $1-2h>0$ there. Its value at $Q=4$ exceeds one. Hence $1\le Q\le4$ and $X\ge R^2/8>X_{\rm ext}$. Taking $\tau$ sufficiently small also ensures $q<q_*$ and puts the entire shell where the localization is one.

On this shell, $\tau/r^2\in[1/(4R^2),1/R^2]$. The positive minimum of $F$ on this fixed interval and \eqref{eq:exterior} give $|u|\ge c_R\tau^{-1/2-h}$. The shell volume is exactly
\[
 |S_\tau|=(2\pi)(2\tau^d)\int_{R\sqrt\tau}^{2R\sqrt\tau}r\dd r
 =6\pi R^2\tau^{1+d}.
\]
Integrating the fourth power of the velocity lower bound gives
$\norm u_4^4\ge c\tau^{1+d-4(1/2+h)}=c\tau^{-1/2-5h}$.
\end{proof}

\begin{theorem}[Dynamically generated kinetic fluctuations]
\label{thm:fluctuations}
Assumption~\ref{ass:source} yields a solution of the closed fixed-gauge wave--induction model with initial data $\psi_0=V^{-1/2}$ and $A_0=0$, a body force smooth through $t=1$, and constant normalized probability density. Its mean matter kinetic energy per unit mass is uniformly bounded, while
\begin{equation}
 \Var_{\psi(t)}(\K_{A(t)})\ge c_1(1-t)^{-1/2-5h}-c_2\longrightarrow\infty.
 \label{eq:varlower}
\end{equation}
Here $h$ is the fixed source parameter in Assumption~\ref{ass:source}. The exponent is a proved lower-bound exponent; no matching full-field upper bound is asserted.
\end{theorem}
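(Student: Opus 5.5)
The proof assembles the earlier results. I would first take the Navier--Stokes solution $(u,p)$ supplied by Assumption~\ref{ass:source} on $0\le t<1$, with $\nu=1$, $u(0)=0$ and a force $f$ smooth through $t=1$. Applying Proposition~\ref{prop:transfer} with $S_0=0$ gives the lifted fields $S$, $\psi=\sqrt{n_0}e^{iS/\eta}$ and $A=u-\nabla S$. These solve the closed model \eqref{eq:constraints}--\eqref{eq:induction} with $\Phi=p$ on $[0,1)$, in the fixed gauge, with initial data $\psi(0)=\sqrt{n_0}=V^{-1/2}=\psi_0$ and $A(0)=0$. The constraint $|\psi|^2=n_0$ holds identically, so the probability density is the constant $n_0=V^{-1}$ and $\norm{\psi}_2=1$. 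The force is the one prescribed in the assumption, so its smoothness through $t=1$ is inherited directly. Smoothness of the lifted fields at each $t<1$ also makes $\K_{A(t)}$ self-adjoint on $H^2(M)$, which is what Proposition~\ref{prop:moments} needs.

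For the mean energy I would use \eqref{eq:mean}: $\mu_1(t)=\tfrac{n_0}{2}\norm{u(t)}_2^2\le \tfrac{n_0}{2}C_E^2$ by \eqref{eq:energyinput}. This is uniform in $t<1$. Multiplying by $m$ gives the physical mean matter kinetic energy and does not affect boundedness.

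For the variance I would insert both estimates into \eqref{eq:variance}. Lemma~\ref{lem:shell} bounds the first term from below by $\tfrac{n_0}{4}c\,\tau^{-1/2-5h}$ for $0<\tau<\tau_0$. The subtracted term is at most $\tfrac{n_0^2}{4}C_E^4$. This gives \eqref{eq:varlower} with $c_1=n_0c/4$ and $c_2=n_0^2C_E^4/4$, valid for $1-\tau_0<t<1$. To extend the bound to all of $[0,1)$, I would note that the variance is nonnegative. It therefore suffices to enlarge $c_2$ so that $c_1\tau_0^{-1/2-5h}\le c_2$, since on $[0,1-\tau_0]$ one then has $c_1(1-t)^{-1/2-5h}\le c_1\tau_0^{-1/2-5h}\le c_2$, making the right side nonpositive there. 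Divergence follows because $1/2+5h>0$.

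The only delicate point is bookkeeping rather than analysis. The identities of Proposition~\ref{prop:moments} must be applied in the lifted gauge, or equivalently in the constant gauge \eqref{eq:constantgauge} using gauge invariance. The lower-bound constants from Lemma~\ref{lem:shell} hold only near $\tau=0$, which is handled by the enlargement of $c_2$ above. The statement's remark on exponents needs no proof: only a lower bound is claimed, so no upper estimate on $\norm{u}_4$ is required.
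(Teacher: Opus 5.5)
Your proposal is correct and follows the paper's proof exactly. The trajectory and initial data come from Proposition~\ref{prop:transfer}, the bounded mean from \eqref{eq:mean} with \eqref{eq:energyinput}, and \eqref{eq:varlower} from Proposition~\ref{prop:moments} with Lemma~\ref{lem:shell}, using the same constants $c_1=n_0c/4$ and $c_2=n_0^2C_E^4/4$ near $t=1$. Your use of nonnegativity of the variance to enlarge $c_2$ goes slightly beyond the paper: the paper's stated constant only covers $1-\tau_0<t<1$, whereas your choice makes the bound hold on all of $[0,1)$.
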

\begin{proof}
Proposition~\ref{prop:transfer} gives the closed trajectory and initial data. Equations \eqref{eq:mean} and \eqref{eq:energyinput} bound the first moment by $n_0C_E^2/2$. Proposition~\ref{prop:moments} and Lemma~\ref{lem:shell} give \eqref{eq:varlower}, with $c_2=n_0^2C_E^4/4$.
\end{proof}

The estimate also gives a lower bound on the kinetic standard deviation: for sufficiently late times, it is at least a positive multiple of $(1-t)^{-1/4-5h/2}$. Thus the fluctuations grow without bound while the mean matter kinetic energy remains bounded. The latter bound concerns the kinetic observable of the wave state and includes no additional energy functional for the dynamical connection.

\section{Endpoint kinetic operator and domain loss}
The variance estimate establishes the growth of fluctuations before the singular time. We next identify the limiting connection so that the endpoint can be described by a fixed kinetic operator. Its form and operator domains then determine, respectively, whether the first and second kinetic spectral moments remain finite.

\begin{lemma}[The endpoint connection]
\label{lem:endpointfield}
Under Assumption~\ref{ass:source}, the complete family has a unique weak endpoint
\begin{equation}
 u(t)\rightharpoonup u_T\quad\text{in }L^2(M),\qquad
 u_T\in L^2(M;\R^3)\setminus L^4(M;\R^3),\qquad\diver u_T=0.
 \label{eq:endpointfield}
\end{equation}
\end{lemma}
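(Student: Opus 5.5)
The plan is to obtain the weak limit from the energy bound, identify it through the pointwise limit away from the singular point, and then read off the failure of $L^4$ integrability from the exact exterior profile.

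\emph{Existence and uniqueness of the limit.} By \eqref{eq:energyinput}, the family $\{u(t)\}_{t<1}$ is bounded in $L^2(M)$, so every sequence $t_k\to1$ has a weakly convergent subsequence. To show that all such limits coincide, I would use the pointwise information in Assumption~\ref{ass:source}. Away from $x_*=0$, $u$ has a locally smooth one-sided limit; call it $u_T(x)$ for $x\ne0$. On each compact set $K\subset M\setminus\{0\}$, $u(t)\to u_T$ uniformly as $t\to1$. Take a test function $\phi\in L^2$ and split $\phi=\phi\chi_{B_\epsilon}+\phi\chi_{M\setminus B_\epsilon}$. The second piece gives a pairing that converges to $\inner{u_T}{\phi\chi_{M\setminus B_\epsilon}}$. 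The first piece is bounded by $C_E\norm{\phi\chi_{B_\epsilon}}_2$, which is small uniformly in $t$. By Fatou's lemma, $\norm{u_T}_2\le C_E$, so $u_T\in L^2$, and a $3\epsilon$ argument gives $u(t)\rightharpoonup u_T$ for the full family. This also shows that any weak subsequential limit equals $u_T$ almost everywhere, which gives uniqueness.

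\emph{Divergence-free property.} Since $\diver u(t)=0$ distributionally, for each smooth $\varphi$ we have $\inner{u(t)}{\nabla\varphi}=0$. Passing to the weak limit gives $\inner{u_T}{\nabla\varphi}=0$.

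\emph{Failure of $L^4$ integrability.} Let $\tau\to0$ in \eqref{eq:exterior} at fixed $x\ne0$ inside the exterior region. From \eqref{eq:similarity}, $q\to q_T$, where $q_T=z^2q_T^{2h}$, that is, $q_T=|z|^{2/(1-2h)}=|z|^{1/d}$, and $\tau/r^2\to0$. The exterior condition becomes $r^2\ge 2X_{\rm ext}|z|^{1/d}$. I expect this to be the main obstacle: one has to check that points satisfying this condition, with $q<q_*$ and near $0$, stay in the exact exterior for all $\tau$ sufficiently small. Monotonicity of $q$ in $\tau$ should handle this, since $q$ decreases to $q_T$ and $X$ therefore increases. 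In that region
\[
u_T=F(0)r^{-1-2h}e_\vartheta .
\]
I then integrate $|u_T|^4=F(0)^4r^{-4-8h}$ over the set
\[
\{|z|\le (r^2/2X_{\rm ext})^{d},\ r<r_0\}.
\]
The $z$-extent is of order $r^{2d}=r^{1-2h}$, so the integral is of order
\[
\int_0^{r_0} r^{-4-8h}\,r^{1-2h}\,r\dd r=\int_0^{r_0} r^{-2-10h}\dd r=\infty .
\]
Hence $u_T\notin L^4$. As a consistency check, the same computation with $|u_T|^2$ gives $\int r^{-2-4h}r^{2-2h}\dd r=\int r^{-6h}\dd r<\infty$, in agreement with $u_T\in L^2$.
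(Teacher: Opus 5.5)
Your proposal is correct and follows essentially the same route as the paper. You use the $L^2$ bound with a ball/complement splitting and local smooth convergence for the weak limit; your direct use of $L^2$ tests is valid by uniform convergence on the compact complement and finite volume, and it replaces the paper's smooth tests plus density. You then use gradient tests for the divergence, and the exact exterior profile on a wedge $|z|\lesssim r^{2d}$ to get the divergent exponent $-2-10h$.

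One small imprecision: points on the boundary $r^2=2X_{\rm ext}|z|^{1/d}$ of your region never enter the exterior, since $q(\tau)>|z|^{1/d}$ for $\tau>0$. They form a null set, so your conclusion stands. The paper avoids this by using the strictly smaller wedge $|z|<br^{2d}$ with $1/(2b^{1/d})>X_{\rm ext}$.
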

\begin{proof}
Define $u_T$ by the locally smooth limit off $x_*$ and arbitrarily at that point. Fatou's lemma and \eqref{eq:energyinput} give $\norm{u_T}_2\le C_E$. For a smooth test vector $\xi$, split the pairing of $u(t)-u_T$ with $\xi$ into a ball $B_\epsilon(x_*)$ and its complement. On the ball its absolute value is at most $2C_E\norm{\xi1_{B_\epsilon}}_2$, which tends to zero with $\epsilon$. On the complement use the local smooth convergence. Density of smooth tests in $L^2$ and the uniform norm bound give weak convergence of the full family. Testing with gradients gives the distributional divergence assertion.

For fixed $z\ne0$, the positive solution branch of \eqref{eq:similarity} satisfies $q(z,\tau)\to|z|^{1/d}$ as $\tau\downarrow0$. Choose $b>0$ with $1/(2b^{1/d})>X_{\rm ext}$, and $r_0$ small enough that
\[
 W=\{0<r<r_0,\ |z|<br^{2d},\ 0\le\vartheta<2\pi\}
\]
lies inside the relevant localization. Every fixed interior point with $z\ne0$ eventually belongs to the exact exterior as $\tau\downarrow0$. Thus
$u_T=F(0)r^{-1-2h}e_\vartheta$ almost everywhere on $W$. In particular,
\begin{equation}
 \int_W|u_T|^p\dd x
 =4\pi bF(0)^p\int_0^{r_0}r^{2-2h-p(1+2h)}\dd r.
 \label{eq:wedge}
\end{equation}
For $p=4$ the exponent is $-2-10h<-1$, giving divergence. For $p=2$ the local exponent is $-6h>-1$, consistent with the already proved global $L^2$ bound. No claim about the full endpoint's other integrability thresholds is needed.
\end{proof}

For a rough connection, the magnetic quadratic form provides the natural starting point \cite{Simon1979}. The closed-form representation theorem then supplies the associated self-adjoint operator \cite[Theorem~2.14]{Teschl2014}. We give the periodic constant-state criterion explicitly, including the regularity assumptions needed for both directions.

\begin{proposition}[The constant-state domain criterion]
\label{prop:domain}
Let $A\in L^2(M;\R^3)$ be distributionally divergence-free. On $C^\infty(M;\C)$ define the nonnegative magnetic form
\begin{equation}
 q_A^0(f,g)=\frac12\inner{-i\eta\nabla f+Af}{-i\eta\nabla g+Ag}.
 \label{eq:roughform}
\end{equation}
It is closable; its closure $q_A$ defines a unique nonnegative self-adjoint operator $\K_A$. For $\psi_0=V^{-1/2}$,
\begin{equation}
 \psi_0\in\D(\K_A^{1/2}),\qquad
 q_A[\psi_0]=\frac1{2V}\norm A_2^2,
 \qquad
 \psi_0\in\D(\K_A)\ \Longleftrightarrow\ A\in L^4(M).
 \label{eq:domaincriterion}
\end{equation}
\end{proposition}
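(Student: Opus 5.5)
Closability comes first. I will write $q_A^0[f]=\frac12\norm{Tf}_2^2$ with $T=-i\eta\nabla+A$, viewed as an operator from $L^2(M)$ to $L^2(M;\C^3)$ defined on $C^\infty(M;\C)$. A form of this type is closable exactly when $T$ is closable. To see that $T$ is closable, take $f_n\to0$ in $L^2$ with $Tf_n\to G$ in $L^2$. For a smooth test vector field $\xi$,
\[
\inner{Tf_n}{\xi}=\inner{f_n}{i\eta\nabla\cdot\xi+A\cdot\xi}.
\]
The function $A\cdot\xi$ lies only in $L^2$, not $L^\infty$. It is, however, a fixed $L^2$ function because $\xi$ is bounded. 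Hence the right side tends to zero, so $\inner G\xi=0$ for every $\xi$ and $G=0$. The closure $q_A$ is therefore a closed nonnegative form. The representation theorem \cite[Theorem~2.14]{Teschl2014} then gives a unique nonnegative self-adjoint $\K_A$ with $\D(\K_A^{1/2})=\D(q_A)$ and $q_A[f]=\norm{\K_A^{1/2}f}^2$. The constant $\psi_0$ is smooth, so $\psi_0\in\D(q_A)$ and
\[
q_A[\psi_0]=\tfrac12\norm{A\psi_0}_2^2=\norm A_2^2/(2V).
\]

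For the operator-domain criterion I will use the standard characterization of the representation theorem: $\psi\in\D(\K_A)$ if and only if $\psi\in\D(q_A)$ and there is $w\in L^2$ with $q_A(\phi,\psi)=\inner\phi w$ for all $\phi$ in a form core. In that case $\K_A\psi=w$. The smooth functions form a core by construction. Fix smooth $\phi$. Then
\[
q_A(\phi,\psi_0)=\frac12\inner{-i\eta\nabla\phi+A\phi}{A\psi_0}.
\]
The pairing $\inner{-i\eta\nabla\phi}{A}$ is $\inner{\phi}{i\eta\nabla\cdot A}$ in the distributional sense, and this vanishes because $\diver A=0$; here one uses that conjugation is harmless since $A$ is real. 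So
\[
q_A(\phi,\psi_0)=\frac{\psi_0}{2}\int_M\overline\phi\,|A|^2\dd x.
\]
This integral is finite for $\phi\in C^\infty$ because $|A|^2\in L^1$.

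If $A\in L^4$, then $w=\psi_0|A|^2/2$ lies in $L^2$. The criterion gives $\psi_0\in\D(\K_A)$, with $\K_A\psi_0=|A|^2\psi_0/2$, which matches \eqref{eq:action}.

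Conversely, suppose $\psi_0\in\D(\K_A)$, and let $w=\K_A\psi_0\in L^2$. Then $\inner\phi w=\inner{\phi}{\psi_0|A|^2/2}$ for all smooth $\phi$. So $w$ and $\psi_0|A|^2/2$ agree as distributions, hence as $L^1$ functions. This forces $|A|^2\in L^2$, that is, $A\in L^4$.

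The main obstacle is justifying the domain characterization with only a core rather than the full form domain. I handle it by recalling that $\psi\in\D(\K_A)$ with $\K_A\psi=w$ holds iff $q_A(\phi,\psi)=\inner\phi w$ for all $\phi\in\D(q_A)$. The map $\phi\mapsto q_A(\phi,\psi_0)$ is continuous in the form norm, and $\phi\mapsto\inner\phi w$ is continuous in $L^2$. By density of the core in the form norm, the identity on smooth $\phi$ therefore extends to all of $\D(q_A)$. A secondary subtlety is the sesquilinear convention, meaning which slot is antilinear; the computation is symmetric because $A$ is real, so this only affects signs of the vanishing divergence term.
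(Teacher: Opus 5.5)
Your proposal is correct and matches the paper's proof step for step: you prove closability of the first-order operator $-i\eta\nabla+A$ with smooth vector tests, apply the representation theorem, compute $q_A(\phi,\psi_0)=\frac{\psi_0}{2}\int_M\overline\phi\,|A|^2\dd x$ using distributional divergence-freeness, and prove both directions of the $L^4$ criterion, extending from the smooth core to the full form domain by form-norm density. The only slip is a harmless sign: the formal adjoint is $-i\eta\nabla\cdot\xi+A\cdot\xi$, not $+i\eta\nabla\cdot\xi+A\cdot\xi$, and the divergence pairing likewise carries $-i\eta$; this changes nothing, since that term is either a fixed $L^2$ function or zero.
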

\begin{proof}
Let $D_A^0=-i\eta\nabla+A$ initially map smooth scalar functions into vector $L^2$. This is well-defined since smooth functions are bounded. Suppose $f_j\to0$ in scalar $L^2$ and $D_A^0f_j\to F$ in vector $L^2$. For every smooth vector test $\xi$, integration by parts gives
\[
 \inner\xi{D_A^0f_j}=\inner{-i\eta\diver\xi+A\cdot\xi}{f_j}\longrightarrow0.
\]
The function on the left slot of the right pairing is in $L^2$. Density of the tests implies $F=0$, proving closability of $D_A^0$ and hence of \eqref{eq:roughform}. Its closure has form domain $Q_A=\D(\overline D_A^0)$ and value $q_A[f]=\norm{\overline D_A^0f}_2^2/2$. The closed-form representation theorem gives $\K_A$ with $Q_A=\D(\K_A^{1/2})$.

The constant $\psi_0$ is in the original form core, with $D_A^0\psi_0=A\psi_0$, proving the first two claims. For a smooth scalar test $\varphi$, distributional divergence-freeness gives
\begin{equation}
 q_A(\varphi,\psi_0)
 =\frac{\psi_0}{2}\int_M\big(i\eta\nabla\overline\varphi\cdot A+
                       \overline\varphi|A|^2\big)\dd x
 =\frac{\psi_0}{2}\int_M\overline\varphi|A|^2\dd x.
 \label{eq:domaintest}
\end{equation}
If $\psi_0\in\D(\K_A)$, this distribution must be represented by an $L^2$ function, namely $\K_A\psi_0$. Therefore $|A|^2\in L^2$, which is $A\in L^4$. Conversely, if $A\in L^4$, set $g=\psi_0|A|^2/2\in L^2$. Equation \eqref{eq:domaintest} extends from the form core to all $\varphi\in Q_A$ by approximation in the form norm. The definition of the operator associated with $q_A$ then gives $\psi_0\in\D(\K_A)$ and $\K_A\psi_0=g$.
\end{proof}

The quadratic form is needed at this regularity because the second-order differential expression for an $L^2$ connection need not map every smooth function into $L^2$. The construction therefore begins with the first-order operator and its closed form. Under the stated assumptions, no identification of its form domain with ordinary $H^1$ is made.

\begin{theorem}[Endpoint kinetic-domain loss]
\label{thm:endpoint}
For the trajectory in Theorem~\ref{thm:fluctuations}, use the constant-phase gauge \eqref{eq:constantgauge} and set $A_T=u_T$. The closed form in \eqref{eq:roughform} defines a positive endpoint kinetic operator $\K_T$, for which
\begin{equation}
 \boxed{\psi_0\in\D(\K_T^{1/2})\setminus\D(\K_T).}
 \label{eq:endpointdomain}
\end{equation}
The endpoint state is normalized and has finite first kinetic spectral moment and infinite second spectral moment and variance. These conclusions remain true under a smooth periodic endpoint gauge transformation applied to both state and operator.
\end{theorem}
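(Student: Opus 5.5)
The plan is to combine Lemma~\ref{lem:endpointfield} with Proposition~\ref{prop:domain} and then translate domain membership into spectral moments.

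First, Lemma~\ref{lem:endpointfield} gives $u_T\in L^2(M;\R^3)$ with $\diver u_T=0$ distributionally. So $A_T=u_T$ satisfies the hypotheses of Proposition~\ref{prop:domain}. That proposition yields the closable form $q_{A_T}$ and the nonnegative self-adjoint operator $\K_T$. It also gives $\psi_0\in\D(\K_T^{1/2})$ with $q_{A_T}[\psi_0]=\norm{u_T}_2^2/(2V)\le C_E^2/(2V)$. Since Lemma~\ref{lem:endpointfield} also shows $u_T\notin L^4$, the equivalence in \eqref{eq:domaincriterion} gives $\psi_0\notin\D(\K_T)$. This proves \eqref{eq:endpointdomain}. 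Normalization is immediate from $\norm{\psi_0}_2^2=V\cdot V^{-1}=1$.

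Second, I will express the moments spectrally. Let $\mu_{\psi_0}$ be the spectral measure of $\K_T$ in the state $\psi_0$, a probability measure on $[0,\infty)$. By the spectral theorem \cite[Sec.~3.1]{Teschl2014},
\begin{equation*}
\int\lambda\dd\mu_{\psi_0}=\norm{\K_T^{1/2}\psi_0}_2^2=q_{A_T}[\psi_0]<\infty .
\end{equation*}
The same theorem gives $\psi_0\in\D(\K_T)$ if and only if $\int\lambda^2\dd\mu_{\psi_0}<\infty$. Hence the second moment is $+\infty$. The variance is $\int\lambda^2\dd\mu_{\psi_0}-\big(\int\lambda\dd\mu_{\psi_0}\big)^2$. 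Since the first moment is finite, this difference is also $+\infty$.

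Third, I will check gauge invariance. Take a smooth periodic real $\lambda$ and let $U$ be multiplication by $e^{i\lambda/\eta}$, which is unitary. On smooth functions, the identity $D_{A'}(e^{i\lambda/\eta}g)=e^{i\lambda/\eta}D_Ag$ with $A'=A_T-\nabla\lambda$ holds. Smooth functions form a core preserved by $U$, and $U$ is an isometry. So the closures correspond: $Q_{A'}=UQ_{A_T}$ and $q_{A'}[Uf]=q_{A_T}[f]$. Uniqueness in the representation theorem then gives $\K_{A'}=U\K_TU^{*}$. Therefore $U\psi_0\in\D(\K_{A'}^{1/2})\setminus\D(\K_{A'})$, and the spectral measure of $U\psi_0$ under $\K_{A'}$ coincides with $\mu_{\psi_0}$. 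Consequently all moment statements are preserved.

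The only step needing real care is this gauge step: identifying the closures, which are not a priori defined on $H^1$. The argument rests on $U$ mapping the smooth core onto itself and intertwining the first-order operators there. No further analytic obstacle is expected, since the divergence of the fourth moment is already contained in Lemma~\ref{lem:endpointfield}.
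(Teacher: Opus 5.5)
Your proposal is correct and follows essentially the same route as the paper's proof. You combine Lemma~\ref{lem:endpointfield} with Proposition~\ref{prop:domain}, read off the finite first and infinite second moments from the spectral measure of $\K_T$ in $\psi_0$, and obtain gauge covariance because multiplication by $e^{i\lambda/\eta}$ maps the smooth form core onto itself and intertwines the magnetic forms, so the closures and their associated operators are unitarily equivalent.
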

\begin{proof}
Combine Lemma~\ref{lem:endpointfield} with Proposition~\ref{prop:domain}. If $\mu$ is the spectral probability measure of $\K_T$ in $\psi_0$, the spectral theorem gives
\[
 \int_0^\infty s\dd\mu(s)=q_{A_T}[\psi_0]=\frac1{2V}\norm{u_T}_2^2<\infty,
 \qquad \int_0^\infty s^2\dd\mu(s)=\infty.
\]
Subtracting a finite mean does not make the second moment finite, so the central variance is infinite. The first moment here is defined by the form or spectral integral; it is not an $L^2$ pairing with an undefined $\K_T\psi_0$.

For a real smooth periodic $\lambda$, multiplication by $G=e^{i\lambda/\eta}$ maps the smooth form core bijectively to itself, and $q_{A_T-\nabla\lambda}^0(Gf,Gg)=q_{A_T}^0(f,g)$. Taking closures gives unitary equivalence of the associated positive operators and transport of both domains. This proves the stated covariance without assuming a rough endpoint gauge calculus.
\end{proof}

\section{Physical interpretation and implications}
In the present construction, kinetic fluctuations become singular while the probability density remains uniform, $|\psi|^2=1/V$, and the mean matter kinetic energy remains bounded. The distinction persists at the identified endpoint: the normalized state has finite first kinetic spectral moment, whereas its second moment is infinite. In the gauge \eqref{eq:constantgauge}, the Hilbert-space state is constant and the evolving connection determines the changing kinetic observable. The construction therefore shows how higher kinetic moments can record fluid concentration that is not excluded by density or mean-energy control.

This conclusion is specific to the covariant kinetic observable. In the constant-phase gauge, the scalar term is $\Phi=-|u|^2/2$, so $(\K_u+\Phi)\psi_0=0$ for every $t<1$. The kinetic variance can therefore diverge even though the full generator has zero variance in this state. More generally, a time-dependent gauge transformation adds a time-derivative term to the conjugated Hamiltonian. These facts prevent interpreting \eqref{eq:varlower} as a full-Hamiltonian variance blowup or as a quantum-speed-limit result.

For a material with a fixed nonzero microscopic length, continued contraction eventually removes the scale separation required by continuum hydrodynamics. Writing the variables on a continuous configuration space does not extend the validity of the evolution law to smaller scales. In the constant-phase gauge, $A=u$, so the connection retains the singular hydrodynamic structure. The wave--induction system remains a constrained nonlinear representation of viscous flow; it is neither an isolated linear quantum system nor a self-consistent Maxwell--Schr\"odinger model. This interpretation agrees with the distinction made by Dietrich and Vautherin \cite{Dietrich1985} between a formal wave representation and a quantum derivation. Continuity of the wave function consequently gives no basis for claiming greater physical realizability of the singularity.

\begin{remark}[Dependence on the high-energy dispersion]
\label{rem:dispersion}
The variance divergence concerns the nonrelativistic quadratic kinetic observable. For fixed $c>0$, define the operator-square-root comparison observable
\begin{equation}
 R_c=c^2\left(\sqrt{I+2\K_A/c^2}-I\right).
 \label{eq:sqrtkinetic}
\end{equation}
This is one standard magnetic square-root prescription \cite{Hiroshima2017}. Its spectral function $r_c(s)$ satisfies
$r_c(s)^2+2c^2r_c(s)=2c^2s$ for $s\ge0$. Hence every normalized $\psi\in\D(\K_A^{1/2})$ belongs to $\D(R_c)$ and
\begin{equation}
 \Var_\psi(R_c)\le\norm{R_c\psi}_2^2\le2c^2q_A[\psi].
 \label{eq:sqrtvariance}
\end{equation}
In fact $\D(R_c)=\D(\K_A^{1/2})$: the reverse inclusion follows by integrating $s=r_c(s)+r_c(s)^2/(2c^2)$ and using Cauchy--Schwarz. Thus the bounded first kinetic moment in our trajectory bounds this comparison variance, including at the endpoint, even though the variance of $\K_A$ diverges. The singularity is consequently sensitive to the assumed high-energy dispersion. This is an observable comparison on the same state and connection; it does not construct a relativistic coupled evolution or show that this trajectory solves a Dirac, Maxwell--Schr\"odinger, or square-root evolution equation. A bounded mean nonrelativistic kinetic energy alone is not a uniform low-energy justification for its higher moments.
\end{remark}
Despite its infinite second kinetic moment, the endpoint state remains normalized and has finite kinetic form energy. For the fixed positive operator $\K_T$, the auxiliary group $e^{-is\K_T}\psi_0$ defines a continuous $L^2$ orbit for all $s$, whereas exclusion from $\D(\K_T)$ removes the strong derivative at $s=0$ \cite[Theorem~5.1]{Teschl2014}. Thus the domain result identifies a loss of strong differentiability for this auxiliary evolution. It does not supply a continuation of the original coupled system.

The endpoint operator has been constructed directly from the weak $L^2$ limit of the connection. Such coefficient convergence alone does not imply strong-resolvent or Mosco convergence, and neither type of operator convergence is established here. No uniqueness or stability result for a singular endpoint evolution, or continuation of the coupled system through $t=1$, is established either. These questions require information beyond the limiting coefficient and the constant-state domain criterion.

The prescribed force admits a smooth space--time extension across the event, whereas the evolving connection becomes singular. Hence the singular behavior in this construction is generated during the evolution from regular initial fields; it is not introduced by a singular initial connection or a singular body force. The bounded mean controls matter kinetic energy and supplies no bound for an additional field-energy functional. Any prediction for a microscopic system would require a specified model, dispersion relation and range of valid scales.

Within these conditions, the contribution is the dynamical connection between fluid concentration, the growth of kinetic fluctuations and an identified endpoint domain loss. The rotational representation, the fluid existence mechanism and the static form/operator-domain distinction are inherited structures. We prove how their combination produces the endpoint from constant wave data and zero connection, with a lower rate obtained from the complete velocity in an exact exterior region. This gives a benchmark for assessing what mean-energy bounds can and cannot control in hydrodynamic wave models, and for formulating further questions about higher moments and operator convergence.

\section{Consistency checks and verification}
We checked the wave--induction equations and kinetic-moment identities using a smooth periodic flow with known analytical derivatives. The same quantities were evaluated numerically by Fourier differentiation, including after a time-dependent gauge transformation. Details of the test fields, numerical procedure and results are provided in the Supplemental Material \cite{Supplemental}.

For all nine test cases, the numerical results agree with the analytical identities to within $10^{-13}$. This agreement supports the implementation of the covariant operators and gauge transformations used above. The tests concern prescribed smooth fields; the singularity rate and endpoint-domain results follow from the analytical proofs in Sections 3 and 4.

\paragraph{Research assistance and verification.}
OpenAI ChatGPT and Codex assisted literature discovery, exploratory derivations, scientific interpretation, drafting, code generation and consistency checks. The author directed the research question and scope. Verification was assisted by AI. The human author is responsible for the final manuscript.

\section*{Data and code availability}
The numerical methods, code and data accompany this work as arXiv ancillary files and as Supplemental Material for journal review \cite{Supplemental}. The sources of the external Navier--Stokes construction are identified in Ref.~\cite{OpenAI2026} and Appendix~\ref{app:source}.

\section*{Acknowledgments and declarations}
The author reports no specific funding or dedicated support and no relevant financial or nonfinancial competing interests. The research and manuscript benefited from the AI assistance described with the verification methods above.

\appendix
\section{External singularity input and source audit}
\label{app:source}
Assumption~\ref{ass:source} is the sole external singularity input. The following properties are taken from the analysis manuscript \cite{OpenAI2026}; the subsequent fourth-moment and endpoint-domain calculations are proved here.
\begin{enumerate}
\item Theorem 1.1 supplies the viscosity-one smooth solution before $t=1$, zero initial velocity, a fixed compact spatial support, bounded $L^2$ kinetic energy, and a force smooth through the singular time. Proposition 10.1 supplies the localization used near the singular point, and Lemma 10.3 extends the force in the same spatial support. A large periodic cell containing the common support strictly in its interior permits disjoint periodization without rescaling the local coordinates.
\item Theorem 3.1(ii), Proposition 9.9 Steps 3--4, and Lemma 10.2 give one-sided local smooth limits away from the singular point, after the localization. Lemma 10.2 explicitly combines the positive-$q$ region with the positive-radius exterior. These are the inputs to the full-family weak endpoint argument of Lemma~\ref{lem:endpointfield}.
\item Theorem 3.1(iii), Eq.~(3.5), and Proposition 9.9 Step 4 give the exact exterior velocity \eqref{eq:exterior}. The annular wave and mean corrections vanish there. This is why Lemma~\ref{lem:shell} applies to the complete velocity without a core-dominance assumption.
\item Lemma A.6 and Eq.~(A.32) give positivity of the heat factor and $H(0)=1$. Together with the positive exterior coefficient, these give the compact-interval lower bound for $F$ and the nonzero endpoint coefficient used in \eqref{eq:wedge}.
\end{enumerate}
The external input is taken from the 166-page manuscript dated 8 September 2026 and consulted on 9 September 2026. The accompanying source-dependency record identifies the specific statements used here \cite{Supplemental}. The source authors also provide a Lean formalization. We have not run that project or independently verified the complete external existence proof; the transfer, fourth-moment and endpoint-domain arguments are proved in the present article.

\end{document}